\numberwithin{equation}{section}
\newtheorem{definition}{Definition}[section]
\newtheorem{lem}[definition]{Lemma}
\newtheorem{thm}[definition]{Theorem}
\newtheorem{cor}[definition]{Corollary}
\begin{document}
\title{On loop spaces with marking}
\author{Ryo Horiuchi}
\date{}

\maketitle

\section{Introduction}
An analogous construction of simplicial homotopy group for Kan complex can be applied to saturated complicial sets\footnote{In \cite{Horiuchi}, the analogous construction is applied to non-saturated ones as well.} to give monoids.
In this paper, we investigate how the construction of loop spaces of Kan complexes lifts to the complicial setting and relates to such monoids.

Letting $\mathrm{msSet}_*$ denote the category of pointed simplicial sets with marking, we have the following.

\begin{thm}Loop space functor $\Omega:\mathrm{msSet}_*\to \mathrm{msSet}_*$ is a right Quillen functor from the model category of pointed $(\infty, n+1)$-categories to that of pointed $(\infty, n)$-categories.
\end{thm}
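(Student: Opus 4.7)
The natural approach is to exhibit a left adjoint $\Sigma : \mathrm{msSet}_* \to \mathrm{msSet}_*$ to $\Omega$ and verify that $\Sigma$ is left Quillen from the $(\infty,n)$-model structure on the source to the $(\infty,n+1)$-model structure on the target. Explicitly, $\Omega X$ can be defined as the iterated pullback
\[
\Omega X \;=\; * \times_X X^{\Delta^1_\sharp} \times_X *,
\]
with marking induced from the internal hom in $\mathrm{msSet}_*$; the left adjoint $\Sigma$ is then essentially a smash product with a marked pointed circle and exists by local presentability of $\mathrm{msSet}_*$. An explicit description of $\Sigma$ is useful when passing to generating (acyclic) cofibrations.

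The first part of the verification is that $\Sigma$ preserves cofibrations. Cofibrations in all the complicial model structures at play are generated, after Verity, by the boundary inclusions $\partial \Delta^k \hookrightarrow \Delta^k$ together with appropriate marking extensions. Since $\Sigma$ commutes with colimits, it suffices to check these generators, which is a direct combinatorial examination of how smashing with the marked circle acts on simplices and on marked edges.

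The second, more substantive, part is that $\Sigma$ preserves acyclic cofibrations. Recall that the $(\infty,n)$-model structure on $\mathrm{msSet}$ is the Bousfield localization of Verity's saturated complicial model structure by the maps that mark all simplices of dimension $>n$, with fibrant objects the $n$-trivial saturated complicial sets. The key dimension-shift to verify is that $\Sigma$ sends each such localizing map in dimension $k>n$ to an acyclic cofibration of the $(\infty,n+1)$-model structure; equivalently, when $X$ is a fibrant $(\infty,n+1)$-category, those $(k+1)$-simplices of $X$ with $k>n$ that constitute the $k$-simplices of $\Omega X$ are already marked in $X$, forcing $\Omega X$ to be $n$-trivial.

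The principal obstacle will be precisely this combinatorial tracking of marking data through the pullback (or dually through $\Sigma$), together with compatibility with the saturation condition, since both the basepoint map and the evaluation maps interact subtly with the marking on $X^{\Delta^1_\sharp}$. Once one records which localizing maps generate the $(\infty,n)$-model structure, checking them against $\Sigma$ reduces to a finite, though delicate, verification at the level of generators.
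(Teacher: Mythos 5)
There is a genuine gap, and it starts with the identification of the adjoint pair. The $\Omega$ of the statement is by definition the right adjoint of the \emph{reduced join suspension} $\Sigma_+$, given by the pushout of $\Delta[0]\star X$ along the collapse of $\Delta[-1]\star X\cup\langle x_\bot x_\top\rangle$, in which the new edge $x_\bot x_\top$ is deliberately left \emph{unmarked}; correspondingly $\Omega(Z,a)$ is a pullback of Verity's path construction $\operatorname{P}^{\triangleleft}_a(Z)$, so an $n$-simplex of $\Omega(Z)$ is an $(n+1)$-simplex of $Z$ whose $0$-th face is degenerate at $a$. Your proposal instead defines $\Omega X = {*}\times_X X^{\Delta^1_\sharp}\times_X {*}$ with a \emph{marked} interval, whose left adjoint is smash with a marked circle. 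These are genuinely different functors: the unmarked new edge is precisely what makes $\Sigma_+$ raise categorical dimension from $(\infty,n)$ to $(\infty,n+1)$ (a marked suspension coordinate yields an invertible direction and no dimension shift --- the suspension of a space is a space), and the marked path space sees only invertible loops, which would break the companion result $\tau_{n+1}(X)\cong\tau_n(\Omega X)$ since $\tau_{n+1}$ records non-invertible cells. The paper even flags that $\operatorname{S}^n$ is \emph{not} the $n$-fold smash of $\operatorname{S}^1$ in this setting, so the smash description of $\Sigma$ is not available; the join is not symmetric monoidal in the way your reduction assumes.

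Second, even setting aside the wrong model for $\Sigma$, your verification only addresses the localizing (triviality) maps $\Delta[m]\to\Delta[m]_t$ for $m>n$, and you never establish the base case that the adjunction is Quillen for the underlying saturated complicial structures, i.e.\ that $\Sigma_+$ sends complicial horn extensions, thinness extensions, and saturation extensions to weak equivalences. This is the substantive input, and it is not a finite combinatorial check at generators: the paper obtains it from Ozornova--Rovelli's result that $\Delta[0]\star(-)$ preserves $(\infty,n+1)$-weak equivalences between (cofibrant) objects, combined with a pushout comparison to descend to the quotient defining $\Sigma_+$. The triviality maps are then handled by the explicit identification of $\Sigma_+\Delta[n+1]\to\Sigma_+\Delta[n+1]_t$ as a pushout of the acyclic cofibration $\Delta[n+2]\to\Delta[n+2]_t$ --- note the dimension shift $n+1\mapsto n+2$, which is where the $(\infty,n)\to(\infty,n+1)$ change of model structure actually enters. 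Your ``equivalently, fibrant objects have the relevant simplices marked'' reformulation is a reasonable heuristic but does not substitute for either step: the fibrant-objects criterion for descending a Quillen adjunction to a Bousfield localization presupposes the unlocalized Quillen adjunction you have not proved, and with your marked-circle $\Sigma$ the dimension-shift claim it is meant to verify is false.
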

Here, the loop space functor is defined to be the right adjoint of reduced suspension functor, namely it is a straightforward generalization of the loop space functor of simplicial sets. 
We will study this in section 3.

Ozornova-Rovelli, Riehl and Verity (\cite{OR}, \cite{R}, \cite{V1}) constructed model structures on the category $\mathrm{msSet}$ of simplicial sets with marking, which are considered as simplicial models of weak higher categories.
More precisely, for $n\in\mathbb{N}$, $\mathrm{msSet}$ admits a model structure called the model of $(\infty, n)$-categories.
In particular, for $n=0, 1$, they are equivalent to the models of Kan complexes and quasicategories respectively (see \cite{V1} for more detail).
The category $\mathrm{msSet}$ also admits a model structure considered to give the model of weak $\omega$-categories.

By the proof of the theorem above, it follows that for any pointed saturated complicial set $X$, $\Omega(X)$ is also a pointed saturated complicial set since they are fibrant with respect to the model structure of  weak $\omega$-categories.
So its homotopy monoids make sense.

Our main theorem is the following.

\begin{thm}For any $n\in\mathbb{N}$ and any pointed saturated complicial set $X$, there is a monoid isomorphism $\tau_{n+1}(X)\cong\tau_{n}(\Omega(X))$.
\end{thm}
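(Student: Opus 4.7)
The plan is to identify $\tau_n(X)$ with a set of pointed homotopy classes $[S^n_*, X]$, where $S^n_* \in \mathrm{msSet}_*$ is a suitable pointed marked $n$-sphere, and then deduce the statement from the adjunction $\Sigma \dashv \Omega$ established above. First I would check that the definition of $\tau_n(X)$ from \cite{Horiuchi}, as homotopy classes of $n$-simplices whose boundary is degenerate at the basepoint, agrees with pointed homotopy classes of marked maps from $S^n_* := \Sigma^n S^0_*$: such an $n$-simplex is precisely the data of a pointed map $\Delta^n / \partial\Delta^n \to X$ with the induced marking, and when $X$ is a pointed saturated complicial set, the simplicial homotopy relation used in \cite{Horiuchi} matches left-homotopy in $\mathrm{msSet}_*$ with respect to a suitable cylinder on $S^n_*$.

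Granting this identification, the underlying bijection of sets follows from the adjunction: since $\Omega$ is right Quillen (so in particular preserves fibrancy of $X$ and transports a chosen cylinder compatibly), we have
\[
\tau_{n+1}(X) \;=\; [S^{n+1}_*, X] \;=\; [\Sigma S^n_*, X] \;\cong\; [S^n_*, \Omega X] \;=\; \tau_n(\Omega X).
\]
Here $\Omega X$ is a pointed saturated complicial set by the first theorem of the introduction, so the right-hand side is indeed a homotopy monoid in the sense of \cite{Horiuchi}.

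To upgrade this bijection to a monoid isomorphism I would use the pinch map $S^{n+1}_* \to S^{n+1}_* \vee S^{n+1}_*$, which induces the multiplication on $[S^{n+1}_*, X]$. Because $\Sigma$ is a left adjoint it preserves wedges, so the pinch map on $\Sigma S^n_*$ is canonically the suspension of the pinch map on $S^n_*$; transporting across the adjunction then shows that the induced multiplication on $[S^n_*, \Omega X]$ is the one coming from the pinch map on $S^n_*$.

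The main technical obstacle will be to verify that this pinch-map multiplication coincides with the combinatorial horn-filling multiplication that defines the monoid structure on $\tau_n$ in \cite{Horiuchi}. The classical Kan-complex proof exhibits a single $(n+2)$-simplex whose faces simultaneously witness both operations, and I expect the same strategy to work here provided all relevant faces can be marked so as to lie in the saturated complicial subcategory; the crucial point is that saturation of $X$ should force exactly the marked cells produced by the horn extension to be equivalences, giving the needed compatibility between the abstract pinch-map multiplication and the explicit $\tau_n$-multiplication from \cite{Horiuchi}.
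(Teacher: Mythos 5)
Your outline diverges substantially from the paper's argument, and the divergence is where it breaks. The paper's proof is a direct simplex-level identification, with no spheres and no derived adjunction: since $\Omega$ is built from the right adjoint $\operatorname{P}^{\triangleleft}$ of $\Delta[0]\star(-)$, an $n$-simplex of $\Omega(X)$ \emph{is} an $(n+1)$-simplex of $X$ whose $0$-th face is degenerate at the basepoint, so the assignment $\psi:X^x_{n+1}\to\Omega(X)^x_n$ is the identity on underlying simplices, and the face formulas for the join show that $d^X_{n+1}$ of an $(n+2)$-simplex of $X$ is $d^{\Omega(X)}_n$ of the corresponding $(n+1)$-simplex of $\Omega(X)$; hence the horn-filling multiplications $[\alpha][\beta]=[d_{n+1}\theta]$ correspond on the nose and $\Psi$ is a monoid homomorphism essentially by bookkeeping.

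Your route has a genuine gap concentrated exactly in the step you defer as ``the main technical obstacle.'' First, there is no strict pinch map: $S^{n+1}_*$ has a single nondegenerate simplex, so any actual map $S^{n+1}_*\to S^{n+1}_*\vee S^{n+1}_*$ of simplicial sets with marking factors through one wedge summand; a pinch comultiplication can only live in the homotopy category, and in this setting the only way to build it is by complicial horn filling --- i.e., the very comparison with the $\tau_n$-multiplication that you postpone, so your argument is circular at its key point. Second, the claim that ``the pinch map on $\Sigma_+ S^n_*$ is canonically the suspension of the pinch map on $S^n_*$'' is false already classically: the pinch on a suspension collapses the suspension coordinate and is not $\Sigma$ applied to any comultiplication on the base; your own induction would bottom out at a pinch map on $S^0$, which does not exist. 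Third, there is a structural warning that any off-the-shelf suspension/cogroup argument proves too much: cylinder-based suspensions are cogroup objects in the pointed homotopy category, so identifying the multiplication with such a structure would force $\tau_{n+1}(X)$ to be a group, whereas in the complicial setting it is in general only a monoid --- the directedness carried by unmarked simplices (and by the paper's note that $\Sigma_+$ is a \emph{left} suspension, without which $\Psi$ fails to be a homomorphism) is precisely what homotopy-category manipulations of wedges and pinches erase. The adjunction bijection $[\Sigma_+ S^n_*,X]\cong[S^n_*,\Omega X]$ and the identification $\tau_n(X)\cong[S^n_*,X]$ are plausible (all objects are cofibrant and $X$ is fibrant), but even granting them you obtain only a bijection of sets; the monoid comparison, which is the actual content of the theorem, is not established by your outline, while the paper obtains it in one line from the join's face indices.
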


As the usual simplicial homotopy theory can be understood as the geometry of $(\infty, 0)$-categories, it may worth trying to study simplicial homotopy theory of $(\infty, n)$-categories for general $n$ to understand the geometry of spaces with non-reversible constituents.
Since both of the construction of homotopy monoids and that of loop spaces are straightforward generalizations of the classical ones, this theorem may be considered to be a natural generalization of the fundamental fact in the usual simplicial homotopy theory to the higher one.

In the present paper we heavily use the results in \cite{OR2} especially those about the suspension functor.

%%%%%%%%%%%
%二章
%%%%%%%%%%%

\section{Preliminaries}

Assuming the reader is familiar with simplicial sets, we recall some notations about simplicial sets with marking from \cite{OR}, \cite{OR2} and \cite{V1}.

\begin{definition}[\cite{V1}]A pair $(X, mX)$ is a simplicial set with marking\footnote{In \cite{V1}, simplicial sets with marking are called stratified simplicial sets} if
\begin{itemize}
  \item $X$ is a simplicial set,
  \item $mX$ is a set of simplices in $X$ such that $dX\subset mX$ and $X_0\cap mX=\emptyset$,
\end{itemize}
where $dX$ denotes the set of degenerate simplices in $X$.

A map of simplicial sets with marking $f:(X, mX)\to (Y, mY)$ is a simplicial map $f: X\to Y$ such that $f(x)\in mY$ for all $x\in mX$.
\end{definition}

We say a simplex of $(X, mX)$ is marked when it is an element of $mX$, and let $\mathrm{msSet}$ denote the category of simplicial sets with marking and their maps.

For simplicial sets with marking $(X, mX)$ and $(Y, mY)$, we say that $(X, mX)$ is a regular simplicial subset with marking of $(Y, mY)$ if $X\subset Y$ as simplicial sets and $mX=X\cap mY$ (cf. \cite{V1}).

For short, we often write $X$ for a simplicial set with marking $(X, mX)$ omitting $mX$.

\begin{definition}[\cite{OR2}, \cite{V1}]Let $n$ be a natural number and $k\in[n]$.
\begin{itemize}

  \item The standard thin $n$-simplex $\Delta[n]_t$ is the simplicial set with marking whose underlying simplicial set is the standard simplicial set $\Delta[n]$ and  
  \[
  m\Delta[n]_t = \begin{cases}
    d\Delta[n]\cup\{\operatorname{Id}_{[n]}\} & (n\neq 0) \\
    d\Delta[n] & (n=0).
  \end{cases}
\]
  
  \item The $k$-complicial $n$-simplex $\Delta^k[n]$ is the simplicial set with marking whose underlying simplicial set is the standard simplicial set $\Delta[n]$ and
  \[m\Delta^k[n]=d\Delta[n]\cup\{\alpha\in\Delta[n]| \{k-1, k, k+1\}\cap[n]\subset\operatorname{Im}(\alpha)\}.\] 
  
  \item The $n-1$-dimensional $k$-complicial horn $\Lambda^k[n]$ is the regular simplicial subset with marking of $\Delta^k[n]$ whose underlying simplicial set is the usual simplicial $k$-th horn.
  
  \item $\Delta^k[n]''$ (respectively $\Lambda^k[n]'$) is the simplicial set with marking whose underlying simplicial set is the same as that of $\Delta^k[n]$ (respectively $\Lambda^k[n]$) and its marked simplices are $m\Delta^k[n]$ (respectively $m\Lambda^k[n]$) with all its $n-1$-simplices.
  
  \item $\Delta^k[n]':=\Delta^k[n]\cup\Lambda^k[n]'.$
  \item $\Delta[3]_{eq}$ is the simplicial set with marking whose underlying simplicial set is the standard $3$-simplicial set and 
   \[m\Delta[3]_{eq}=d\Delta[3]\cup\Delta[3][2]\cup\Delta[3][3]\cup\{[02], [13]\}, \]
   where $[02]$ (respectively $[13]$) is the $1$-simplex whose image is $\{0, 2\}$ (respectively $\{1, 3\}$).
   \item $\Delta[3]^{\sharp}$ is the simplicial set with marking whose underlying simplicial set is the standard $3$-simplicial set with \[m\Delta[3]^{\sharp}=\bigcup_{n\geq1}\Delta[3][n].\]
  \end{itemize}
\end{definition}

For simplicial sets with marking $X$ and $Y$, $X\star Y$ denotes the join of $X$ and $Y$ (see for example \cite[Definition 2.4]{OR2}).
More precisely, the underlying simplicial set is the join of the underlying simplicial sets. So the set of its $n$-simplices is given by
\[(X\star Y)_n=\coprod_{\substack{k, l\geq-1\\ k+l=n-1}}X_k\times Y_l,\]
where $X_{-1}$ and $Y_{-1}$ are one point sets and a simplex $(x, y)\in X\star Y$ is marked if and only if $x\in X$ and $y\in Y$ are marked.

For a simplex $(x, y)\in X_k\times Y_l\subset(X\star Y)_{k+l+1}$, its face simplices are given by
\[
  d_i(x, y) = \begin{cases}
    (d^X_i(x), y) & 0\leq i\leq k \\
    (x, d^Y_{i-k-1}(y))  & k+1\leq i\leq k+1+l,
  \end{cases}
\]
and its degeneracy simplices are given by
\[
  s_i(x, y) = \begin{cases}
    (s^X_i(x), y) & 0\leq i\leq k \\
    (x, s^Y_{i-k-1}(y))  & k+1\leq i\leq k+1+l.
  \end{cases}
\]

The join operation is defined for augmented simplicial sets with marking and the empty one $\Delta[-1]$ is its unit.

\begin{definition}[\cite{V1}, \cite{OR}]\label{ext}Let $n$ be a natural number.
We call the following obvious maps $(\infty, n)$-elementary anodyne extensions:
\begin{itemize}
  \item the complicial horn extension map
  \[\Lambda^k[m]\to\Delta^k[m]\] for $m\geq 1$ and $k\in[m]$,
  
  \item the thinness extension map
  \[\Delta^k[m]'\to\Delta^k[m]''\] for $m\geq 2$ and $k\in[m]$,
  \item  the triviality extension map 
  \[\Delta[m]\to\Delta[m]_t\] for $m\geq n+1$,
  \item the saturation extension map
  \[\Delta[m]\star\Delta[3]_{eq}\to\Delta[m]\star\Delta[3]^{\sharp}\]
  for $m\geq-1$.

\end{itemize}
\end{definition}

Note that only the triviality extension maps depend on $n$. We call complicial horn extension maps, thinness extension maps and saturated extension maps from Definition \ref{ext} {\it elementary anodyne extensions}.

\begin{definition}[\cite{V1}, \cite{OR}] Let $n$ be a natural number. A simplicial set with marking $X$ is an $n$-trivial saturated complicial set (respectively a saturated complicial set) if it has the right lifting property with respect to $(\infty, n)$-elementary anodyne extensions (respectively elementary anodyne extensions).

A map $f:X\to Y$ of simplicial sets with marking is an $(\infty, n)$-weak equivalence (respectively a saturated complicial weak equivalence) if for any $n$-trivial saturated complicial set (respectively any saturated complicial set) $Z$ the map 
\[f^*: Z^Y\to Z^X\]
on internal homs is a homotopy equivalence in the sense of \cite{V1}.
\end{definition}

Let $\mathrm{msSet}_*$ denote the category of pointed simplicial sets with marking and pointed maps.

\begin{thm}[\cite{OR2}]\label{ORmodel}For any $n\in\mathbb{N}$, $\mathrm{msSet}_*$ admits the following model structure, where 
\begin{itemize}
\item the cofibrations are precisely monomorphisms and 
\item weak equivalences are precisely those maps whose underlying maps of simplicial sets with marking are $(\infty, n)$-weak equivalences (respectively saturated complicial weak equivalences).
\end{itemize}
The fibrant objects are precisely the pointed simplicial sets with marking whose underlying simplicial sets with marking are $n$-trivial saturated complicial sets (respectively saturated complicial sets).

\end{thm}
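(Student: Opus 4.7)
The statement is attributed to \cite{OR2}, so the plan is not to reprove the model structure on $\mathrm{msSet}$ from scratch but to bootstrap it to the pointed category. I would proceed in three stages.

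First, I would cite the unpointed result: by \cite{OR2} the category $\mathrm{msSet}$ itself carries, for each $n$, a model structure with cofibrations the monomorphisms and weak equivalences the $(\infty,n)$-weak equivalences (and analogously for the saturated complicial case), with fibrant objects characterized as in the statement. This is what we are going to transfer.

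Second, I would invoke the standard coslice construction. Since the terminal object of $\mathrm{msSet}$ is $\Delta[0]$ (with the unique marking, in which no $0$-simplex is marked because of the condition $X_0\cap mX=\varnothing$), the pointed category $\mathrm{msSet}_*$ is canonically identified with the coslice $\Delta[0]\downarrow\mathrm{msSet}$. For any model category $\mathcal{C}$ with terminal object $*$, the coslice $*\downarrow\mathcal{C}$ inherits a model structure in which a map is a cofibration, fibration, or weak equivalence exactly when its underlying map in $\mathcal{C}$ is one; this is a well-known general fact and requires only that one check the three axioms (2-out-of-3, retracts, lifting, and factorization) are preserved under the forgetful functor, which is immediate because all three classes are defined by conditions on the underlying arrow and because factorizations can be taken under the basepoint. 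I would write this verification out in a single paragraph: a lift in the coslice is the same as a lift in $\mathrm{msSet}$ commuting with the basepoint, and any factorization in $\mathrm{msSet}$ of a pointed map automatically factors the basepoint through the intermediate object.

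Third, I would translate the two characterizations across this transfer. The cofibrations in $\mathrm{msSet}_*$ are the monomorphisms since monomorphisms in a coslice are precisely the maps whose underlying arrow is a monomorphism, and similarly the weak equivalences are the underlying $(\infty,n)$- or saturated complicial weak equivalences by definition of the transferred structure. For the fibrant objects, an object of $\mathrm{msSet}_*$ is fibrant iff the unique map to the terminal object in $\mathrm{msSet}_*$ is a fibration, iff the underlying map to $\Delta[0]$ is a fibration in $\mathrm{msSet}$, iff the underlying simplicial set with marking has the right lifting property with respect to the generating trivial cofibrations, which by \cite{OR2} is exactly the condition of being an $n$-trivial saturated complicial set (respectively a saturated complicial set).

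There is no genuine obstacle here; the only slightly delicate point is the bookkeeping around the basepoint, in particular that the terminal object $\Delta[0]$ carries the only admissible marking (with no marked $0$-simplex) so that the coslice identification with $\mathrm{msSet}_*$ is unambiguous. Once this is noted, the theorem reduces entirely to the unpointed result of \cite{OR2} together with the formal transfer of a model structure to a coslice category.
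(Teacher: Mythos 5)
Your proposal is correct, and in fact the paper offers no proof of this statement at all --- it is imported verbatim from \cite{OR2} --- so your coslice-transfer argument (the unpointed model structures on $\mathrm{msSet}$ plus the formal lift of all three classes to $\Delta[0]\downarrow\mathrm{msSet}\cong\mathrm{msSet}_*$, with the correct observation that the marking on the terminal object is forced since every positive-dimensional simplex of the point is degenerate) is exactly the standard route taken in the cited source. Nothing further is needed.
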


For $n\in\mathbb{N}$, we call this model structure {\it the model structure of pointed $(\infty, n)$-categories}. Note that $(\infty, n)$-elementary anodyne extensions are weak equivalences in this model structure.

%%%%%%%%%
%三章
%%%%%%%%%

\section{Results}

Using the results and arguments in \cite{OR2} about the suspension functor of simplicial sets with marking, we consider the reduced one in the following.

For a pointed simplicial set with marking $(X, x_\top)$, $\Delta[0]\star X$ has two $0$-cells $x_\bot$ and $x_\top$.
The former is the one corresponding to the $0$-simplex of $\Delta[0]$ and the latter is that corresponding to  the base point of $X$.
We take $x_\bot$ as the base point of $\Delta[0]\star X$.

Also $\Delta[0]\star X$ has the $1$-simplex $x_\bot x_\top$ whose source is $x_\bot$ and target is $x_\top$. 
Note that this $1$-simplex is not marked.
We let $\langle x_\bot x_\top\rangle$ denote the pointed simplicial subset with marking of $\Delta[0]\star X$ generated by $x_\bot x_\top$.

\begin{definition}[reduced suspension]
We define the reduced suspension functor\footnote{In \cite{OR2}, to define the suspension, $X\star\Delta[0]$ is used instead of $\Delta[0]\star X$. It may be more reasonable to call $\Sigma_+$ the reduced {\it left} suspension functor. If we take the right one, the bijection in Theorem \ref{main} may not be a homomorphism of monoids.} $\Sigma_+:\mathrm{msSet}_*\to \mathrm{msSet}_*$ by the following pushout
 \[
   \xymatrix{
 \Delta[-1]\star X\cup\langle x_\bot x_\top\rangle\ar[d]\ar[r]& \Delta[0]\ar[d]\\
 \Delta[0]\star X\ar[r]&\Sigma_+X,\\     
}
\]
for an object $X\in\mathrm{msSet}_*$, where the base point of $\Sigma_+X$ is induced by $x_\bot$. For a map $f$ of pointed simplicial sets with marking, $\Sigma_+(f)$ is the induced one.
\end{definition}

As shown in \cite{OR2} the suspension functor is left adjoint. The reduced one also has the right adjoint.

\begin{lem}$\Sigma_+$ is a left adjoint functor.
\end{lem}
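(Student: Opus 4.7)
The plan is to construct the right adjoint $\Omega:\mathrm{msSet}_*\to\mathrm{msSet}_*$ explicitly, exploiting the known adjunction for the unreduced join. By \cite{OR2} (dualized to the left-sided variant), the functor $\Delta[0]\star(-)$ admits a right adjoint, which we denote $R$. I will use this $R$ to construct $\Omega Y$ as a carefully chosen pointed regular simplicial subset with marking of $RY$.

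Unwinding the defining pushout, a pointed map $\Sigma_+ X \to (Y,y_0)$ is equivalently an unpointed map $f:\Delta[0]\star X\to Y$ whose restriction to $\Delta[-1]\star X\cup\langle x_\bot x_\top\rangle$ is constant at $y_0$. Applying the unpointed adjunction $\Delta[0]\star(-)\dashv R$, such an $f$ corresponds to a map $g:X\to RY$. The constraint $f|_X\equiv y_0$ translates into $g$ factoring through a regular simplicial subset $\Omega Y\subseteq RY$, namely the preimage of the constant simplex at $y_0$ under the natural ``evaluation at the non-cone face'' map; the constraint that $f|_{\langle x_\bot x_\top\rangle}$ is the degenerate edge at $y_0$ translates into $g$ sending the basepoint $x_\top$ of $X$ to the element of $\Omega Y$ corresponding under adjunction to $s_0 y_0$. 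Taking this as the basepoint of $\Omega Y$ yields a functor $\Omega:\mathrm{msSet}_*\to\mathrm{msSet}_*$, and chaining these bijections produces the natural isomorphism
\[
\mathrm{msSet}_*(\Sigma_+ X, Y) \;\cong\; \mathrm{msSet}_*(X, \Omega Y),
\]
establishing $\Sigma_+\dashv\Omega$.

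The main technical obstacle I anticipate is handling the marking data: the right adjoint $R$ has a specific description of which simplices are marked, inherited from $Y$, and I will need to verify that the subobject $\Omega Y\subseteq RY$ has a well-defined compatible marking, and that the adjunction bijection correctly tracks markings in both directions. A related concern is checking that the condition on $f|_{\langle x_\bot x_\top\rangle}$ really does translate into the stated basepoint condition on $g$, using the explicit form of the unit of $\Delta[0]\star(-)\dashv R$ applied to the basepoint. Once these markings are checked consistent, naturality and functoriality of $\Omega$ follow routinely from those of $R$.
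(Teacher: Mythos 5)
Your proposal is correct and takes essentially the same route as the paper: there, $\Omega(Z,a)$ is likewise constructed as the pullback of Verity's right adjoint $\operatorname{P}^{\triangleleft}$ of $\Delta[0]\star(-)$ along the basepoint $a:\Delta[0]\to Z$ (your ``evaluation at the non-cone face'' condition), with basepoint given by the constant simplex on $a$, i.e.\ your $s_0 y_0$. The marking concerns you flag are absorbed automatically by performing the pullback in $\mathrm{msSet}$ itself, where the adjunction $\Delta[0]\star(-)\dashv\operatorname{P}^{\triangleleft}$ already holds, so no separate verification is required.
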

\begin{proof}We construct the right adjoint functor of $\Sigma_+$ by using the right adjoint functor $\operatorname{P}^{\triangleleft}$ of $\Delta[0]\star(-)$ studied in \cite{V1}.

Let $(Z, a)$ be a pointed simplicial set with marking. Then we have a simplicial set with marking $\Omega(Z, a)$ defined by the following pullback
 \[
   \xymatrix{
\Omega(Z, a)\ar[d]\ar[r]& \operatorname{P}^{\triangleleft}_a(Z)\ar[d]\\
 \Delta[0]\ar[r]_a&Z,\\     
}
\]
where the right-hand side vertical map is induced by the natural inclusions $\Delta[n]\hookrightarrow\Delta[0]\star\Delta[n]$.
We give $\Omega(Z, a)$ the base point, which we write $a$ again, by the constant map on $a$ $\Delta[0]\to\operatorname{P}^{\triangleleft}_a(Z)$ and identity map $\Delta[0]\to\Delta[0]$.

To show that this gives a right adjoint of $\Sigma_+$, assume that we have a pointed map $\Sigma_+(X)\to Z$ with respect to $a\in Z$. Then we have the following diagram due to the adjunction $\Delta[0]\star(-)\dashv\operatorname{P}^{\triangleleft}$
 \[
   \xymatrix{
   \Delta[0]\ar@/^11pt/[rrd] \ar@/_14pt/[rdd]\ar[rd]^{x_{\top}}&&\\
&X\ar[d]\ar[r]& \operatorname{P}^{\triangleleft}_a(Z)\ar[d]\\
 &\Delta[0]\ar[r]_a&Z.\\     
}
\]
This diagram gives the desired map $X\to\Omega(Z, a)$, which defines a right adjoint functor $\Omega$ of $\Sigma_+$.
\end{proof}

We may write $\Omega(Z)$ instead of $\Omega(Z, a)$ for short. By definition, an $n$-simplex $\Delta[n]\to\Omega(Z)$ gives rise to a map $\Delta[n]\to\Delta[0]\xrightarrow{a} Z$. Thus the $0$-th face of the corresponding $n+1$-simplex in $Z$ is degenerated on $a$.

This functor is compatible with the model structures in the following sense.

\begin{lem}The reduced suspensionfunctor $\Sigma_+$ is left Quillen from the model structure of pointed $(\infty, n)$-categories to that of pointed $(\infty, n+1)$-categories.
\end{lem}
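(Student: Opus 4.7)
The plan is to verify the two defining conditions of a left Quillen functor: $\Sigma_+$ preserves cofibrations and acyclic cofibrations, between the model structures of pointed $(\infty,n)$- and $(\infty,n+1)$-categories recalled in Theorem~\ref{ORmodel}.

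Preservation of cofibrations is direct. In both source and target, cofibrations are monomorphisms. The join $\Delta[0]\star(-)$ preserves monomorphisms, since both its underlying simplicial set and its set of marked simplices are computed degreewise from the join formula. Given a monomorphism $f:X\to Y$ of pointed marked simplicial sets, the defining pushout squares of $\Sigma_+X$ and $\Sigma_+Y$ assemble into a cube whose vertical maps --- the identity on $\Delta[0]$, the inclusion induced by $f$ on the objects $\Delta[-1]\star(-)\cup\langle x_\bot x_\top\rangle$, and $\Delta[0]\star f$ --- are all monomorphisms. Since $\mathrm{msSet}_*$ is a presheaf category, the induced map $\Sigma_+f$ on pushouts is a monomorphism, hence a cofibration.

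For acyclic cofibrations, since $\Sigma_+$ is a left adjoint it preserves colimits and retracts, and the class of acyclic cofibrations of the pointed $(\infty,n)$-model structure is the saturation of the pointed $(\infty,n)$-elementary anodyne extensions of Definition~\ref{ext}. Hence it suffices to check that $\Sigma_+$ carries each elementary anodyne extension at level $n$ to an acyclic cofibration at level $n+1$. The key input, taken from \cite{OR2}, is the computation of the unreduced suspension $\Delta[0]\star(-)$ on these generators: for complicial horn, thinness, and saturation extensions the indices shift to give generators of the same type at level $n+1$, and for the $n$-dependent triviality extension $\Delta[m]\to\Delta[m]_t$ with $m\geq n+1$ the target becomes a dimension $m+1\geq n+2$ generator. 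The reduced version differs from the unreduced one only by the pushout collapsing $\Delta[-1]\star X\cup\langle x_\bot x_\top\rangle$ to a point; on each pointed generator this additional identification either produces another elementary anodyne extension at level $n+1$, or is itself an acyclic cofibration obtained from such by pushout, using that every object of $\mathrm{msSet}_*$ is cofibrant (so both model structures are left proper).

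The main obstacle is the case-by-case analysis of $\Sigma_+$ on each family of generators, particularly the bookkeeping of marked simplices through the join formula and the subsequent pushout collapse; the substance of this analysis is developed in \cite{OR2} for the unreduced suspension, and the reduced adaptation is the new content to supply.
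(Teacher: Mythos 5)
There are two genuine gaps here, both in the treatment of acyclic cofibrations. First, your reduction to generators rests on the claim that the acyclic cofibrations of the pointed $(\infty,n)$-model structure are the saturation of the elementary anodyne extensions. That is false (or at least unavailable): these are Cisinski-type model structures in which the saturation of the elementary anodyne extensions is a class of anodyne maps strictly smaller, in general, than the class of all acyclic cofibrations --- already for $n=1$, where the model is that of quasicategories, not every acyclic cofibration lies in the saturation of the generating anodynes, and no explicit set of generating acyclic cofibrations is known. The correct reduction is exactly what the paper invokes: \cite[Lemma 1.8]{OR2}, which says that a cocontinuous, cofibration-preserving functor sending the elementary anodyne extensions to weak equivalences is left Quillen (in effect a Joyal--Tierney-type criterion via the right adjoint preserving fibrant objects). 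Without citing such a statement, ``it suffices to check the generators'' is unfounded. Second, your claim that on the horn, thinness and saturation generators ``the indices shift to give generators of the same type at level $n+1$'' would fail if pursued: for instance $\Delta[0]\star\Lambda^k[m]\to\Delta[0]\star\Delta^k[m]$ is not a complicial horn extension, since the cone on a horn is a proper subcomplex of $\Lambda^{k+1}[m+1]$ (it is missing the whole face opposite the cone point), and similar discrepancies occur in the markings. The paper sidesteps every such computation: it proves the stronger statement that $\Sigma_+$ preserves \emph{all} pointed $(\infty,n+1)$-weak equivalences, by applying \cite[Proposition 2.5]{OR2} (the join $\Delta[0]\star(-)$ preserves weak equivalences between cofibrant, i.e.\ all, objects) together with the gluing lemma for the defining pushout of $\Sigma_+$; since the non-triviality elementary anodynes are weak equivalences in every one of these model structures, this disposes of all generators except the triviality extensions in one stroke.

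What you did get right, and what matches the paper, is the one family where a computation is genuinely needed: the triviality extensions $\Delta[m]\to\Delta[m]_t$, $m\geq n+1$, are not $(\infty,n+1)$-weak equivalences themselves, and their reduced suspensions differ only in the marking of the top nondegenerate $(m+1)$-simplex, so $\Sigma_+\Delta[m]\to\Sigma_+\Delta[m]_t$ is a pushout of the level-$(n+1)$ triviality extension $\Delta[m+1]\to\Delta[m+1]_t$, hence an acyclic cofibration (the paper, using \cite[Lemma 1.8]{OR2}, only needs the case $m=n+1$). Your cofibration argument is also essentially fine, though the inference ``presheaf category, hence the map of pushouts of monomorphisms is a monomorphism'' is invalid as a general principle (in $\mathrm{Set}$, map the span $\{*\}\leftarrow\emptyset\rightarrow\{*\}$ to the identity span on $\{*\}$); here it holds because the relevant square is a pullback of monomorphisms and presheaf categories are adhesive, a point worth making explicit even though the paper itself dismisses cofibration preservation as clear. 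To repair your proof, replace the false saturation claim with an appeal to \cite[Lemma 1.8]{OR2}, and replace the generator-by-generator analysis of the non-triviality anodynes with the preservation-of-weak-equivalences argument via \cite[Proposition 2.5]{OR2} and the gluing lemma.
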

\begin{proof}It is clear that $\Sigma_+$ preserves cofibrations.
Let $f:X\to Y$ be a pointed $(\infty, n+1)$-weak equivalence.
Since $\Delta[-1]$ is the unit for join, we have a pointed $(\infty, n+1)$-weak equivalence.
\[f_*: \Delta[-1]\star X\cup\langle x_\bot x_\top\rangle\to \Delta[-1]\star Y\cup\langle y_\bot y_\top\rangle,\] which fits into the following commutative diagram
 \[
   \xymatrix{
 \Delta[0]\ar[d]_{=}&\ar[l]\Delta[-1]\star X\cup\langle x_\bot x_\top\rangle\ar[d]_{f_*}\ar[r]& \ar[d]^{\Delta[0]\star f} \Delta[0]\star X\\
 \Delta[0]&\ar[l]\Delta[-1]\star Y\cup\langle y_\bot y_\top\rangle\ar[r]&  \Delta[0]\star Y\\     
}
\]
By \cite[Proposition 2.5]{OR2}, the right-hand side vertical map is a pointed $(\infty, n+1)$-weak equivalence since every object is cofibrant.
The right-hand side horizontal map is a cofibration.
Thus, we obtain the pointed $(\infty, n+1)$-weak equivalence $\Sigma_+f:\Sigma_+X\to\Sigma_+Y$ as the push out of the diagram.
This shows that $\Sigma_+$ sends all $(\infty, n)$-elementary anodyne extensions except for the triviality extension maps to $(\infty, n+1)$weak equivalences.

By \cite[Lemma 1.8]{OR2}, it is enough to show that the map
\[\Sigma_+\Delta[n+1]\to\Sigma_+\Delta[n+1]_t\]
is an $(\infty, n+1)$-weak equivalence.
By construction, the underlying simplicial sets of $\Sigma_+\Delta[n+1]$ and $\Sigma_+\Delta[n+1]_t$ are the same, and the underlying map of simplicial sets is the identity map.
The only difference is that the non-degenerate $(n+2)$-simplex of $\Sigma_+\Delta[n+1]_t$ is marked.
Therefore we have the following pushout
 \[
   \xymatrix{
 \Delta[n+2]\ar[d] \ar[r]&\Delta[n+2]_t\ar[d]\\
 \Sigma_+\Delta[n+1]\ar[r]&\Sigma_+\Delta[n+1]_t.\\     
}
\]
Since the upper horizontal map is an acyclic cofibration, the lower map is an $(\infty, n+1)$-weak equivalence as desired.
\end{proof}

By Theorem \ref{ORmodel}, $k$-trivial complicial sets are fibrant with respect to the model structure of $(\infty, k)$-categories. We get the following.

\begin{cor}The loop space functor $\Omega$ is right Quillen from the model structure of pointed $(\infty, n+1)$-categories to that of pointed $(\infty, n)$-categories.

In particular for any pointed $n+1$-trivial saturated complicial set $X$, $\Omega(X)$ is a pointed $n$-trivial saturated complicial set.
\end{cor}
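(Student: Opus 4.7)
The plan is to derive this corollary as a formal consequence of the preceding lemma together with the general theory of Quillen adjunctions. The preceding lemma verifies that $\Sigma_+$ is a left Quillen functor from the model structure of pointed $(\infty, n)$-categories to that of pointed $(\infty, n+1)$-categories. Since $\Omega$ was constructed earlier as the right adjoint to $\Sigma_+$, the standard fact that the right adjoint in a Quillen adjunction is automatically right Quillen yields the first sentence of the corollary with no additional work; one could alternatively verify this hands-on by dualising the argument of the previous lemma, checking that $\Omega$ preserves fibrations and acyclic fibrations, but the purely formal route is simpler.

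For the second sentence, the strategy is to invoke the fact that a right Quillen functor preserves fibrant objects, combined with the explicit description of fibrant objects given by Theorem \ref{ORmodel}. By that theorem, the fibrant objects of the model structure of pointed $(\infty, n+1)$-categories are precisely the pointed simplicial sets with marking whose underlying object is an $(n+1)$-trivial saturated complicial set, and analogously for $n$ in the target model structure. Applying $\Omega$ to a pointed $(n+1)$-trivial saturated complicial set $X$ therefore produces a fibrant object of the pointed $(\infty, n)$-category model structure, which is exactly a pointed $n$-trivial saturated complicial set.

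No substantive obstacle is expected, since the essential content has already been packaged into the preceding lemma; the only point to be careful about is the orientation of the adjunction, namely that $\Sigma_+ \dashv \Omega$ gives a Quillen adjunction whose left adjoint goes from the $(\infty,n)$-structure to the $(\infty,n+1)$-structure, so that $\Omega$ runs in the opposite direction, decreasing the categorical level by one, as the statement requires.
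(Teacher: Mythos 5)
Your proposal matches the paper's (implicit) argument exactly: the paper also deduces the first claim formally from the preceding lemma via the adjunction $\Sigma_+ \dashv \Omega$, and obtains the second by combining the preservation of fibrant objects under right Quillen functors with the identification of fibrant objects in Theorem \ref{ORmodel}. Your care about the orientation of the adjunction is appropriate and consistent with the paper.
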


By the same arguments, $\Omega$ is right Quillen with respect to the model structure for pointed saturated complicial sets. Therefore for any pointed saturated complicial set $X$, so is $\Omega(X)$.

As is well known, for a pointed Kan complex $A$ and a natural number $n$, there is an isomorphism $\pi_{n+1}(A)\cong\pi_{n}(\Omega(A))$ between their homotopy groups, where $\Omega$ denotes the usual loop space functor.
In the following, we show that the analogous result for saturated complicial sets holds.
As is shown in \cite{Horiuchi}, the construction of simplicial homotopy groups (see for example \cite{GJ} or \cite{M}) lifts to the complicial setting in a straightforward way.

For a pointed simplicial set with marking $(X, x)$, we let $X^x_{n}$ denote the set of $n$-simplices in $X$ whose boundaries are the degenerate simplicies on $x$.
Then there is an evident map $\psi:X^x_{n+1}\to \Omega(X)^x_{n}, \alpha\mapsto\alpha$.
By definition, for any $\alpha\in X^x_{n+1}$, its $0$-th face is the degenerated simplex on $x$. So $\psi(\alpha)=\alpha$ is indeed an element of $\Omega(X)^x_{n}$.
Note that this map sends marked simplicies to marked simplicies.

\begin{thm}\label{main}For any pointed saturated complicial set $X$ and $n\geq 1$, there is a monoid isomorphism $\tau_{n+1}(X)\cong\tau_{n}(\Omega(X))$.
\end{thm}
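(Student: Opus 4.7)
The plan is to promote the map $\psi\colon X^x_{n+1}\to\Omega(X)^x_n$ of the preceding paragraph to a bijection of monoids. The argument has three parts: check that $\psi$ is a bijection of marked sets, check that it identifies the two homotopy relations defining $\tau$, and check that it preserves the monoid operation.

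First I would unpack the pullback defining $\Omega$ via the adjunction $\Delta[0]\star(-)\dashv\operatorname{P}^{\triangleleft}$: an $n$-simplex of $\Omega(X)$ is an $(n+1)$-simplex $\tilde\sigma$ of $X$ whose $0$-th face is degenerate on $x$, the face operators of $\Omega(X)$ act as $d_{i+1}$ on $\tilde\sigma$, and the base point of $\Omega(X)$ corresponds to the degenerate $(n+1)$-simplex on $x$. Hence $\sigma\in\Omega(X)^x_n$ iff every face $d_j\tilde\sigma$ ($j=0,\ldots,n+1$) is degenerate on $x$, i.e.\ iff $\tilde\sigma\in X^x_{n+1}$. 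The same translation identifies marked simplices on both sides, so $\psi$ is a bijection of marked sets.

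Next I would compare homotopy relations. Two elements $\alpha,\beta\in X^x_{n+1}$ represent the same class in $\tau_{n+1}(X)$ iff there is a marked $(n+2)$-simplex $H$ of $X$ with $\alpha,\beta$ on two adjacent faces and all other faces degenerate on $x$. Using the identification above, the equality $[\psi(\alpha)]=[\psi(\beta)]$ in $\tau_n(\Omega(X))$ similarly unpacks to the existence of a marked $(n+2)$-simplex $K$ of $X$, now with a forced degenerate face at position $0$ and the $\alpha,\beta$-faces shifted by one. To convert between $H$ and $K$ in either direction, I would assemble a complicial horn $\Lambda^k[n+3]\to X$ whose prescribed faces consist of $H$ (resp.\ $K$) together with suitable degenerate simplices on $x$, apply the complicial horn extension of Definition \ref{ext}, and extract the missing face as the required $K$ (resp.\ $H$); the thinness and saturation extensions justify that the extracted face is marked. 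That $\psi$ is a monoid homomorphism then follows by the same pattern: the multiplication in $\tau_{n+1}(X)$ from \cite{Horiuchi} is encoded by a marked $(n+2)$-simplex with representatives of $[\alpha_1],[\alpha_2],[\alpha_1\cdot\alpha_2]$ on three faces and degeneracies on the rest, and under the re-indexing $j\mapsto j+1$ this matches the analogous product-witness in $\Omega(X)$ (with an extra degeneracy at position $0$) up to one more complicial horn-fill of the same kind.

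The main obstacle is the combinatorial horn-filling used in the last two parts: every homotopy- or product-witness in $\Omega(X)$ carries a forced degenerate face at position $0$ inherited from the $\Omega$-construction, so every translation between witnesses in $X$ and witnesses in $\Omega(X)$ has to shift faces by one and either produce or absorb that degeneracy. Pinning down which $k$-complicial horn of dimension $n+3$ to fill, specifying its non-missing faces explicitly, and verifying that the resulting filler is marked (invoking the saturation extension precisely where thinness is not automatic) is the technical heart of the argument; once this bookkeeping is in place, the rest of the verifications are routine.
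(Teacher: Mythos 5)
Your first step agrees with the paper: unpacking the pullback, an $n$-simplex of $\Omega(X)$ is an $(n+1)$-simplex $\tilde\sigma$ of $X$ whose $0$-th face is degenerate on $x$, the face operators act as $d_i^{\Omega(X)}=d_{i+1}^X$ (by the join formula with $k=0$), and $\psi\colon X^x_{n+1}\to\Omega(X)^x_n$ is a marking-preserving bijection. The genuine gap is in what you call the technical heart: no horn-filling in dimension $n+3$ is needed, and the obstacle you set up there is illusory. A homotopy witness $H$ for $\alpha\sim\beta$ in $X^x_{n+1}$ has \emph{all} faces other than the two designated top faces degenerate on $x$; since $n\geq1$, position $0$ is among these, so $d_0H$ is already degenerate on $x$. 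Hence $H$ is itself an $(n+1)$-simplex of $\Omega(X)$, and under the index shift its $\Omega(X)$-faces are exactly the shifted $X$-faces: the designated faces move from positions $n+1,n+2$ to $n,n+1$, the remaining faces stay degenerate on the basepoint of $\Omega(X)$ (totally degenerate simplices on $x$ correspond to totally degenerate simplices on the basepoint), and markedness is preserved and reflected by the identification. Conversely, any witness in $\Omega(X)$ is, by the pullback description, literally such an $H$ in $X$. So your $H$ and $K$ coincide under the canonical identification; there is no degeneracy to ``produce or absorb.'' The same holds for the multiplication, which is the paper's entire argument: the $(n+2)$-simplex $\theta$ from \cite{Horiuchi} witnessing $[\alpha][\beta]=[d^X_{n+1}\theta]$ has $d_0\theta$ degenerate, hence is itself a product witness in $\Omega(X)$, yielding the one-line computation $\Psi([\alpha][\beta])=[d^X_{n+1}\theta]=[d^{\Omega(X)}_{n}\theta]=\Psi([\alpha])\Psi([\beta])$.

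Two further points. First, since you leave the horn-filling bookkeeping unexecuted, your proof is incomplete precisely at the step you flag as crucial; and as conceived that step would also be hard to repair, because a complicial horn extension followed by a thinness extension only yields markedness of the faces adjacent to the filled horn's distinguished index, while the saturation extension is not a device for marking an extracted face, so your markedness claim for the converted witness would not go through as stated. Second, the reason the positions match on the nose is the paper's deliberate choice of the \emph{left} cone $\Delta[0]\star X$ (see the paper's footnote to the definition of $\Sigma_+$): the shift $d_i^{\Omega(X)}=d_{i+1}^X$ keeps the top faces, where the multiplication of \cite{Horiuchi} lives, in corresponding positions, whereas with the right suspension $X\star\Delta[0]$ it is the last face that is lost and the homomorphism property would fail --- exactly the kind of subtlety your ``shift by one'' bookkeeping would need to detect before any horn-filling could be organized.
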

\begin{proof} 
The map induces a bijection $\Psi:\tau_{n+1}(X)\to\tau_{n}(\Omega(X))$, which is a homomorphism of monoids. Indeed, by construction of homotopy monoids, for $\alpha, \beta\in X^x_{n+1}$, the multiplication $[\alpha][\beta]$ in $\tau_{n+1}(X)$ of their homotopy classes can be written as a homotopy class $[d^X_{n+1}(\theta)]$, where $\theta$ is an $n+2$-simplex obtained by $\alpha$ and $\beta$.
By the definition of the simplicial structure of the join, the $n+1$-st face of an $n+2$-simplex in $X$ coincides with the $n$-th face of the corresponding $n+1$-simplex in $\Omega(X)$.
So $\Psi([\alpha][\beta])=\Psi([d^X_{n+1}(\theta)])=[d^{\Omega(X)}_{n}(\theta)]=[\alpha][\beta]=\Psi([\alpha])\Psi([\beta])$.
By definition, $\Psi$ takes the unit to the unit.
\end{proof}

By the same argument, there is a bijection $\tau_1(X)\cong\tau_0(\Omega(X))$ for any pointed saturated complicial set $X$. Thus we can give $\tau_0(\Omega(X))$ the monoid structure via this bijection.

Considering this theorem, we may define the spheres in the complicial setting as follows.
First the $0$-dimensional sphere $\operatorname{S}^0$ is the simplicial set with marking consisting of two points.
For any positive natural number $n$, the $n$-dimensional sphere $\operatorname{S}^n$ is defined to be $\Sigma_+(\operatorname{S}^{n-1})$.
Note that $\operatorname{S}^n$ is not isomorphic to the $n$-times smash of $\operatorname{S}^1$ in general.

By the construction of $\tau_*$ and the definition of triviality, for any $n\geq0$, $k\geq1$ and any pointed $n$-trivial saturated complicial set $X$, $\tau_{n+k}(X)$ is a group. This observation is compatible with the theorem above. In other words, for such an $X$, $\Omega^n(X)$ is a $0$-trivial saturated complicial set. By construction again, $\tau_k(\Omega^n(X))$ is isomorphic to the $k$-th simplicial homotopy group of the Kan complex associated to $\Omega^n(X)$.

As a classical fact, for a pointed Kan complex $A$, $\pi_{k+1}(A)$ is an abelian group. Hence by the same argument above, for a pointed $n$-trivial saturated complicial set $X$, the monoid $\tau_{n+k+1}(X)$ is an abelian group.

\section*{Acknowledgments}
This work is a part of a project suggested by Lars Hesselholt. I appreciate him guiding myself to it.

\end{document}